\newtheorem*{lemma*}{Lemma}
\DeclareMathOperator{\Aut}{Aut}
\DeclareMathOperator{\PGL}{PGL}
\DeclareMathOperator{\Otilde}{\tilde{O}}
\newcommand{\FF}{{\mathbf{F}}}
\newcommand{\PP}{{\mathbf{P}}}
\newcommand{\ZZ}{{\mathbf{Z}}}
\newcommand{\Fq}{\FF_q}
\newcommand{\kbar}{\bar{k}}
\newcommand{\pibar}{\bar{\pi}}
\newcommand{\manypointslink}{\href{http://www.manypoints.org}{\texttt{manypoints}}}
\renewcommand{\hat}{\widehat}
\renewcommand{\tilde}{\widetilde}
\newcommand\lowtilde{\lower0.7ex\hbox{\textasciitilde}}
\newcommand{\mybar}[1]{
  \mathchoice
  {#1\llap{$\overline{\phantom{\displaystyle\rm#1}}$}}
  {#1\llap{$\overline{\phantom{\textstyle\rm#1}}$}}
  {#1\llap{$\overline{\phantom{\scriptstyle\rm#1}}$}}
  {#1\llap{$\overline{\phantom{\scriptscriptstyle\rm#1}}$}}
}  
\renewcommand{\bar}{\mybar}
\begin{document}

\title[Curves of medium genus]{Curves of medium genus with many points}

\author{Everett W. Howe} 
\address{Center for Communications Research,
         4320 Westerra Court,
         San Diego, CA 92121, USA}
\email{however@alumni.caltech.edu}
\urladdr{http://www.alumni.caltech.edu/\lowtilde{}however/}

\date{29 March 2017}
\keywords{Curve, Jacobian, rational point, defect}

\subjclass[2010]{Primary 11G20; Secondary 14G05, 14G10, 14G15} 

\begin{abstract}
The \emph{defect} of a curve over a finite field is the difference between the 
number of rational points on the curve and the Weil--Serre upper bound for the 
number of points on the curve.  We present algorithms for constructing curves of
genus~$5$, $6$, and $7$ with small defect. Our aim is to be able to produce, in
a reasonable amount of time, curves that can be used to populate the online 
table of curves with many points found at 
\href{http://www.manypoints.org}{\texttt{manypoints.org}}.
\end{abstract}

\maketitle

\section{Introduction}
\label{sec:intro}

For every prime power $q$ and non-negative integer $g$, we let $N_q(g)$ denote
the maximum number of rational points on a smooth, projective, absolutely
irreducible curve of genus $g$ over the finite field~$\Fq$.  At the turn of the
present century,  van der Geer and van der Vlugt published  
tables~\cite{GeerVlugt2000} of the best upper and lower bounds on $N_q(g)$ known
at the time, for $g\le 50$ and for $q$ ranging over small powers of $2$ and~$3$.
In~2010, van der Geer, Lauter, Ritzenthaler, and the author (with technical
assistance from Gerrit Oomens) created the \manypointslink\ web 
site~\cite{manypoints}, which gives the currently-known best upper and lower 
bounds on $N_q(g)$ for $g\le 50$ and for a range of prime powers $q$: the 
primes less than~$100$, the prime powers $p^i$ for $p<20$ and $i\le 5$, 
and the powers of $2$ up to~$2^7$.

The \emph{Weil--Serre upper bound}~\cite{Serre1983a} for $N_q(g)$ states that
\[
N_q(g) \le q + 1 + g \lfloor 2\sqrt{q}\rfloor.
\]
When $q \ge (g + \sqrt{g+1})^2$ the Weil--Serre bound is almost always the best
upper bound currently known for $N_q(g)$; the exceptions come from 
``exceptional'' prime powers~\cite[Theorem~4, p.~1682]{HoweLauter2003} and from 
careful case-by-case analyses --- see the introduction to~\cite{HoweLauter2012}
for a summary. On the other hand, lower bounds for $N_q(g)$ are generally 
obtained by producing more-or-less explicit examples of curves with many points.
Typically, this is done by searching through specific families of curves --- for
instance, families of curves obtained via class field theory as covers of 
lower-genus curves, or families of curves with specific nontrivial automorphism
groups.  For small finite fields of characteristic $2$ and $3$, such searches
have been carried out for many genera, so even the earliest versions of the 
van der Geer--van der Vlugt tables gave nontrivial lower bounds for many values 
of~$N_q(g)$.

One of the goals of the \manypointslink\ web site is to encourage researchers to
consider curves over finite fields of larger characteristics. Curves over these 
fields have received far less attention than curves in characteristic $2$ 
or~$3$, so at present the table entries for lower bounds for $N_q(g)$ remain
unpopulated for most $q$ and~$g$.

Of course, there are trivial lower bounds for $N_q(g)$: for instance, if $C$
is a hyperelliptic curve of any genus over $\Fq$, then either $C$ or its 
quadratic twist will have at least $q+1$ points.  To avoid having to worry
about such ``poor'' lower bounds, van der Geer and van der Vlugt decided not to
print a lower bound for $N_q(g)$ in their tables unless it was greater than 
$1/\sqrt{2}$ times the best upper bound known for $N_q(g)$.  This restriction
was adopted for the \manypointslink\ table as well, but it turns out that
it is not a strong enough filter when $q$ is large with respect to~$g$.  The 
administrators of the \manypointslink\ site are considering replacing it with 
the requirement that a lower bound not be published unless the difference
between the lower bound and $q+1$ is at least $1/\sqrt{2}$ times the difference
between the best proven upper bound and $q+1$.

Every genus-$0$ curve over a finite field is isomorphic to $\PP^1$, so
$N_q(0) = q+1$ for all~$q$. For $g=1$ and $g=2$, the value of $N_q(g)$ is also 
known for all $q$.  For $g=1$ this is due to a classical result of 
Deuring~\cite{Deuring1941} (see~\cite[Theorem~4.1, p.~536]{Waterhouse1969}); 
for $g=2$, this is due to work of Serre~\cite{Serre1983a,Serre1983b,Serre1984} 
(see also~\cite{HoweNartEtAl2009}).  There is no easy formula known 
for~$N_q(3)$, but for all $q$ in the \manypointslink\ table the value has
been computed; see the introduction to~\cite{Mestre2010}.  For genus~$4$, the 
exact value of $N_q(4)$ is known for $43$ of the $59$ prime powers $q$ in the 
\manypointslink\ table, and for the remaining $16$ prime powers the lower 
bound for $N_q(4)$ is within $4$ of the best proven upper bound; 
see~\cite{Howe2012} and~\cite{Howe2016}.

In this paper we develop methods for finding curves of genus $5$, $6$, and $7$
with reasonably many points.  Our goal is to find lower bounds for $N_q(g)$ for 
these genera that are somewhat close to the best proven upper bounds, in order
to ``raise the bar'' for what counts as an interesting example.  We hope that
this will inspire others to think of new constructions, new search strategies, 
and faster algorithms.  

The \emph{defect} of a curve $C$ of genus $g$ over $\Fq$ is the difference
between $\#C(\Fq)$ and the Weil--Serre upper bound for genus-$g$ curves 
over~$\Fq$. If $\pi_1, \pibar_1, \ldots, \pi_g, \pibar_g$ are the Frobenius 
eigenvalues (with multiplicity) of $C$, listed in complex-conjugate pairs,
then we have 
\[
\#C(\Fq) = q + 1 - (\pi_1 + \pibar_1 + \cdots + \pi_g + \pibar_g),
\]
so the defect of $C$ is given by
\[
(q + 1 + g \lfloor 2\sqrt{q}\rfloor) - \#C(\Fq) 
= \sum_{i=1}^g \left(\lfloor 2\sqrt{q}\rfloor + \pi_i + \pibar_i\right).
\]
If the Jacobian of $C$ decomposes up to isogeny as the product of the Jacobians
of some other curves~$C_i$, then the multiset of Frobenius eigenvalues for $C$ 
is the union of the multisets of Frobenius eigenvalues for the~$C_i$, and it 
follows that the defect of $C$ is the sum of the defects of the~$C_i$.

The ideas behind our new constructions for producing curves of genus $5$, $6$, 
and $7$ with small defect are similar to those used in~\cite{Howe2016} to 
produce curves of genus $4$ with small defect. The basic strategy is to search 
through families of curves $D$ that are Galois extensions of $\PP^1$ with group
$G$ isomorphic to a $2$-torsion group; in practice, we will consider 
$G\cong (\ZZ/2\ZZ)^2$ and $G\cong (\ZZ/2\ZZ)^3$.  The extension $D\to \PP^1$ 
will then have $\#G-1$ subextensions $C_i\to\PP^1$ of degree~$2$, corresponding
to the index-$2$ subgroups of~$G$. A result of Kani and 
Rosen~\cite[Theorem B, p.~308]{KaniRosen1989} says that in this situation, the 
Jacobian of $D$ is isogenous to the product of the Jacobians of the~$C_i$; it
follows that the genus of $D$ is the sum of the genera of the~$C_i$, and the 
defect of $D$ is equal to the sum of the defects of the~$C_i$. 

Within this basic framework, however, there are many ways of structuring an 
algorithm, some of which are much more efficient than others. In 
Section~\ref{sec:4} we review the genus-$4$ construction used in our earlier
paper~\cite{Howe2016}, and we describe our new constructions for higher genera
in Sections~\ref{sec:5}, \ref{sec:6}, and~\ref{sec:7}.

At various points in this paper we will find it convenient to consider the 
\emph{minimal defect} $D_q(g)$ for genus-$g$ curves over~$\Fq$, which we define
by
\[
D_q(g) = q + 1 + g \lfloor 2\sqrt{q}\rfloor - N_q(g).
\]
Giving a lower bound for $N_q(g)$ is equivalent to giving an upper bound
for~$D_q(g)$.

The computations we describe in this paper were implemented in
Magma~\cite{BosmaCannonEtAl1993} on a 2010-era laptop computer, with a 2.66~GHz
Intel Core i7 ``Arrandale'' processor and 8~GB of RAM.

\section{Review of the genus-\texorpdfstring{$4$}{4} construction}
\label{sec:4}

In~\cite{Howe2016}, we presented an algorithm to produce genus-$4$ curves $D$
of small defect over finite fields $\Fq$ of odd characteristic. The heuristic
argument presented in~\cite{Howe2016} leads us to expect that as $q$ ranges over
all sufficient large primes and over almost all prime powers, the algorithm will
produce a genus-$4$ curve with defect at most $4$ in time $\Otilde(q^{4/3})$. 
The construction has two main ideas: First, if one is given a genus-$2$ curve 
$C/\Fq$ that can be written $y^2 = f_1 f_2$ for two cubic polynomials $f_1$ and
$f_2$ in $\Fq[x]$, then one can efficiently find a value of $a\in \Fq$, if such 
a value exists, such that the two genus-$1$ curves $E_1$ and $E_2$ defined by 
$y^2 = (x-a) f_1$ and $y^2 = (x-a) f_2$ have small defect. If such an $a$ 
exists, and if we let $D$ be the curve defined by the two equations 
$w^2 = (x-a) f_1$ and $z^2 = (x-a) f_2$, then the degree-$4$ cover $D\to\PP^1$
that sends $(x,w,z)$ to $x$ is a Galois extension, with group $(\ZZ/2\ZZ)^2$,
and we have the following diagram, in which each arrow denotes a degree-$2$ map:
\begin{equation}
\label{EQ:V4}
\begin{gathered}
\xymatrix{
              & D\ar[dl]\ar[dr]     &             \\
 E_1\ar[dr]   & C \ar@{<-}[u]\ar[d] & E_2\ar[dl]  \\
              & \ \PP^1.            &             \\
}
\end{gathered}
\end{equation}
This is the situation described in Section~\ref{sec:intro}, and we find that the
genus of $D$ is $4$ and the defect of $D$ is equal to the sum of the defects of
$E_1$, $E_2$, and~$C$.  In particular, since $E_1$ and $E_2$ have small defect,
we see that the defect of $D$ is not much larger than the defect of $C$.

The second main idea in the construction of~\cite{Howe2016} is to have a method
for efficiently producing genus-$2$ curves $C$ with small defect. 
In~\cite{Howe2016} this is accomplished by taking pairs of elliptic curves with
small defect, ``gluing'' them together along their $2$- or $3$-torsion subgroups 
using~\cite[Proposition~4, p.~325]{HoweLeprevostEtAl2000} 
and~\cite[Algorithm~5.4, p.~185]{BrokerHoweEtAl2015} to produce genus-$2$
curves with small defect, and then using Richelot isogenies to produce more and 
more curves with small defect from these seed curves.

The algorithm to produce genus-$4$ curves with small defect works by producing
many genus-$2$ curves with small defect (using the second idea) until one is 
produced for which one can find a value of $a$ (using the first idea) that leads
to a diagram like Diagram~\eqref{EQ:V4}.

We note for future reference that the discussion leading up to Heuristic~5.6 
in~\cite{Howe2016} suggests that for fixed $d$ and for $q\to\infty$ we might 
expect there to be on the order of $d^{3/2} q^{3/4}$ genus-$2$ curves of 
defect~$d$.  Therefore, we might also expect there to be on the order of 
$d^{5/2} q^{3/4}$ genus-$2$ curves of defect $d$ or less.  Similarly, we might
expect there to be on the order of $d^{3/2} q^{1/4}$ elliptic curves of defect 
$d$ or less.  Since there are about $2q$ elliptic curves over $\Fq$ and about 
$2q^3$ curves of genus~$2$, we see that it is reasonable to expect that a random
elliptic curve will have defect at most $d$ with probability on the order of 
$d^{3/2}/ q^{3/4}$, while a random genus-$2$ curve will have defect at most $d$ 
with probability on the order of $d^{5/2}/q^{9/4}$.

\section{Constructions for genus-\texorpdfstring{$5$}{5} curves}
\label{sec:5}

Before we describe our algorithms for constructing genus-$5$ curves with small
defect, we present a technique for efficiently computing all separable quartics 
and cubics over a finite field~$k$, up to squares in $k^*$ and the action 
of~$\PGL_2(k)$.  

Let $k = \Fq$ be a finite field of odd characteristic, and let $S$ denote the 
set of separable quartics and cubics in $k[x]$.  The group of squares in $k^*$ 
acts on $S$ by multiplication, and the group $\PGL_2(k)$ acts on~$S$ modulo 
squares as follows:  Given $f\in S/k^{*2}$ and $\alpha\in\PGL_2(k)$, we let 
$\left[\begin{smallmatrix}a&b\\c&d\end{smallmatrix}\right]$ be a matrix that 
represents $\alpha$, and we define 
\[
\alpha(f \bmod k^{*2}) = (cx+d)^4 f\left(\frac{ax+b}{cx+d}\right)\bmod k^{*2}.
\]
Had we chosen a different matrix to represent $\alpha$, the right-hand side of 
the preceding equality would be modified by a square, so we do get a 
well-defined action of $\PGL_2(k)$ on $S/k^{*2}$.  In one of our algorithms it
will be useful to be able to quickly calculate orbit representatives for $S$ 
under this combined action.

Given a separable quartic or cubic $f\in k[x]$, let $C$ be the curve $y^2 = f$
and let $E$ be the Jacobian of~$C$.  Since $C$ has genus~$1$, $E$ is an elliptic
curve, and since genus-$1$ curves over $k$ have rational points, there is an
isomorphism $\varphi\colon C\to E$.  Under this isomorphism, the involution 
$\iota$ of $C$ given by $(x,y)\mapsto (x,-y)$ becomes an involution on~$E$, 
which must be of the form $P\mapsto P_0 - P$ for some point $P_0$ in~$E(k)$.
If $\varphi'\colon C\to E$ is another isomorphism, then there is an automorphism
$\varepsilon$ of $E$ and a point $Q_0\in E(k)$ such that 
\[
\varphi'(P) = \varepsilon\varphi(P) + Q_0,
\]
so that $\varphi'$ takes the involution $\iota$ on $C$ to the involution
$P\mapsto P'_0 - P$ on $E$, where $P'_0 = \varepsilon(P_0) + 2Q_0$. Thus, given 
$f\in S$, we obtain a pair $(E,[P_0])$, where $E$ is an elliptic curve over~$k$ 
and $[P_0]$ is an element of $E(k)/2E(k)$ up to the action of $\Aut E$.

\begin{lemma*}
The map from $S$ to pairs $(E,[P_0])$ defined above induces a bijection between
the orbits of $S/k^{*2}$ under the action of $\PGL_2(k)$ and the set of all 
pairs $(E,[P_0])$.
\end{lemma*}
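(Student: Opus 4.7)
The plan is to establish well-definedness of the induced map on orbits, then prove surjectivity and injectivity. For well-definedness, I would note that scaling $f$ by $c^2$ yields a curve $C$ isomorphic to the original via $(x,y) \mapsto (x, y/c)$, hence the same Jacobian and distinguished involution. For the $\PGL_2(k)$ action, the coordinate change $x \mapsto (ax+b)/(cx+d)$ extends to an isomorphism of the smooth projective models of $y^2 = f$ and $y^2 = (cx+d)^4 f((ax+b)/(cx+d))$ commuting with the hyperelliptic involutions, so the resulting pair $(E, [P_0])$ is unchanged.

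For surjectivity, given a pair $(E, [P_0])$ I would form the involution $\sigma\colon P \mapsto P_0 - P$ on $E$. Its geometric fixed locus consists of the four solutions of $2P = P_0$, so by Riemann--Hurwitz the quotient $E/\sigma$ has genus $0$; since the image of any $k$-rational point of $E$ is rational, the quotient is isomorphic to $\PP^1$. Writing the degree-$2$ cover $E \to \PP^1$ in the form $y^2 = f(x)$ (with $\deg f$ equal to $3$ or $4$ depending on whether a branch point lies over $\infty$) produces an element $f \in S$ whose associated pair is $(E, [P_0])$. Different choices of affine coordinate and of trivialization for the double cover alter $f$ by $\PGL_2(k)$ and by squares in $k^*$, respectively, so the full orbit is recovered.

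For injectivity, suppose $f, f' \in S$ produce the same pair. Set $C = \{y^2 = f\}$ and $C' = \{y^2 = f'\}$ and choose isomorphisms $\varphi\colon C \to E$ and $\varphi'\colon C'\to E$ carrying the hyperelliptic involutions to $P \mapsto P_0 - P$ and $P \mapsto P_0' - P$ respectively. Since $[P_0] = [P_0']$, there exist $\varepsilon \in \Aut E$ and $Q_0 \in E(k)$ with $P_0' = \varepsilon(P_0) + 2Q_0$; replacing $\varphi'$ by $\varepsilon^{-1}\circ(\varphi' - Q_0)$ arranges $P_0 = P_0'$. Then $\varphi^{-1}\circ \varphi'\colon C' \to C$ commutes with the hyperelliptic involutions, so it descends to an automorphism of the common quotient $\PP^1$, i.e., an element of $\PGL_2(k)$ that transforms $f'$ into a square multiple of $f$.

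The main delicacy will be the bookkeeping in this last step: the freedom $(\varepsilon, Q_0)$ in choosing $\varphi$ must precisely match the equivalence that defines $[P_0]$ modulo $2E(k)$ and modulo $\Aut E$, and the descended map on quotients must agree with the explicit $\PGL_2$ formula on polynomials. Verifying this compatibility, together with checking that cubic-versus-quartic degree jumps under the $\PGL_2$ action cause no issues (since $\PGL_2$ elements with $c = 0$ preserve the cubic locus while those with $c \neq 0$ can convert cubics to quartics), is the core technical point.
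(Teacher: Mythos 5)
Your proof is correct and follows the paper's three-part structure (well-definedness, injectivity, surjectivity), with the injectivity argument essentially identical to the paper's. The difference is in the surjectivity step: the paper constructs the required cubic or quartic explicitly — starting from a Weierstrass model $y^2 = g$ with $g = x^3 + ax^2 + bx + c^2$ and $P_0 = (0,c)$, it writes down the quartic $f = x^4 - 2ax^2 - 8cx + a^2 - 4b$ and exhibits an explicit isomorphism $C \to E$ sending the involution $\iota$ to $P \mapsto P_0 - P$. You instead argue abstractly that the quotient $E/\sigma$ by $\sigma\colon P\mapsto P_0 - P$ is a genus-$0$ curve with a rational point, hence $\PP^1$, and that the resulting double cover can be written $y^2 = f$ in odd characteristic. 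Both arguments are sound, but the paper's explicit formula is what makes the lemma algorithmically useful: the text immediately following the proof extracts an $\Otilde(q)$ enumeration of orbit representatives directly from the displayed quartic. Your argument proves existence but would not by itself yield the fast enumeration the paper needs; you would still have to supply an explicit normal form for the quotient map. As you note, the only delicate bookkeeping is confirming that the freedom $(\varepsilon, Q_0)$ in choosing $\varphi$ matches the equivalence defining $[P_0]$ in $E(k)/2E(k)$ modulo $\Aut E$, and your normalization $\varphi' \mapsto \varepsilon^{-1}\circ(\varphi' - Q_0)$ handles this correctly.
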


\begin{proof}
The pair $(E,[P_0])$ that we obtain from $f$ clearly depends only on the
isomorphism class of the pair $(C,\iota)$, and that isomorphism class is fixed
by the actions of $k^{*2}$ and $\PGL_2(k)$.  Thus, we do indeed get a map from 
orbits to pairs.

Suppose $f_1$ and $f_2$ are two elements of $S$ that give rise to the same pair
$(E,[P_0])$.  Let $C_1$ and $C_2$ be the curves $y^2 = f_1$ and $y^2 = f_2$,
respectively, with involutions $\iota_1$ and~$\iota_2$. Then there are 
isomorphisms $\varphi_1\colon C_1\to E$ and $\varphi_2\colon C_2\to E$ that take
$\iota_1$ and $\iota_2$ to the involution $P\mapsto P_0 - P$ of~$E$, so the
isomorphism $\psi = \varphi_2^{-1} \varphi_1$ from $C_1$ to $C_2$ takes 
$\iota_1$ to $\iota_2$.  It follows that $\psi$ is of the form
\[
(x,y) \mapsto \left(\frac{ax + b}{cx+d}, \frac{e y}{(cx + d)^2}\right)
\]
for some $a,b,c,d,e\in k$.  We see that
$f_1 \bmod k^{*2} = \alpha(f_2 \bmod k^{*2}),$ where $\alpha$ is the class of 
$\left[\begin{smallmatrix}a&b\\c&d\end{smallmatrix}\right]$ in $\PGL_2(k)$.
Thus, the map from orbits to pairs in injective.

To finish the proof, we need to show that the map from orbits to pairs is
surjective.  Suppose $E$ is an elliptic curve over $k$ and $P_0$ is a point 
in~$E(k)$.  We will produce a cubic or quartic $f$ that gives rise to the pair
$(E,[P_0])$.

Let $y^2=g$ be a Weierstrass model for $E$, where $g\in k[x]$ is a monic cubic.
If $P_0$ is the infinite point on $E$ then we can just take $f = g$, so assume
that $P_0$ is an affine point $(x_0,y_0)$.  By replacing $g(x)$ with $g(x+x_0)$,
we may assume that $x_0 = 0$, so that $g$ is of the form $x^3 + ax^2 + bx + c^2$
and $P_0 = (0,c)$.

Let $f = x^4 - 2a x^2 - 8cx + a^2 - 4b$.  We compute that the discriminant of
$f$ is $2^{12}$ times the discriminant of $g$, so $f$ is a separable quartic. 
If we let $C$ be the curve $y^2 = f$, then there is an isomorphism 
$\psi\colon C\to E$ given by
\[
(x,y) \mapsto \left( \frac{y + x^2 - a}{2}, \frac{x( y + x^2 - a)}{2} - c\right),
\]
and we compute that the two infinite points on $C$ are sent to the points $P_0$
and $\infty$ on $E$.  It follows that the involution $\iota$, which swaps the 
two infinite points on $C$, gets sent to the involution $P\mapsto P_0 - P$,
which swaps $P_0$ and $\infty$.  Thus every pair $(E,[P_0])$ comes from an 
element of~$S$.
\end{proof}

Note that the proof of this lemma gives us an algorithm for producing 
representatives for all of the orbits of $S/k^{*2}$ under the action of 
$\PGL_2(k)$, in time~$\Otilde(q)$: For each $j$-invariant in $k$, we list all
the isomorphism classes of elliptic curves $E$ with that $j$-invariant, compute
the group $E(k)/2E(k)$ up to the action of $\Aut E$, and for a representative
point $P_0$ for each element we compute the quartic $f$ that appears at the end
of the proof of the lemma.

We turn now to our constructions of small-defect curves of genus~$5$ over a 
finite field $k = \Fq$ of odd characteristic.  Our strategy for producing such
curves will be to construct diagrams like Diagram~\eqref{EQ:V4}, except that the
genera of the intermediate curves will be $2$, $1$, and~$2$, instead of $1$, 
$2$, and~$1$. In other words, our aim will be to construct a diagram
\begin{equation}
\label{EQ:genus5}
\begin{gathered}
\xymatrix{
            & D\ar[dl]\ar[dr]     &             \\
 C_1\ar[dr] & E \ar@{<-}[u]\ar[d] & C_2\ar[dl]  \\
            & \PP^1               &             \\
}
\end{gathered}
\end{equation}
in which $C_1$ and $C_2$ are genus-$2$ curves with small defect, $E$ is a 
genus-$1$ curve with small defect, and the arrows are maps of degree~$2$.  This
means that we would like to find genus-$2$ curves $w^2 = f g_1$ and 
$z^2 = f g_2$, where $f$ is a quartic or a cubic polynomial and $g_1$ and $g_2$
are coprime quadratics, such that both of the genus-$2$ curves have small 
defect, and such that the genus-$1$ curve $y^2 = g_1 g_2$ also has small defect.
(If $f$ is a quartic, we can also allow one of $g_1$ and $g_2$ to be linear.)

Our first strategy runs as follows.  We begin by enumerating separable quartics
and cubics $f\in k[x]$ up to the action of $k^{*2}$ and $\PGL_2(k)$, as outlined
above. For each such $f$ we enumerate all quadratic and linear polynomials~$g$,
up to squares in~$k^*$, such that $fg$ is separable of degree $5$ or $6$ and 
such that $y^2 = fg$ has small defect.  We then consider all pairs $(g_1,g_2)$
of such $g$ such that $g_1 g_2$ is separable of degree $3$ or~$4$, and we check 
whether $y^2 = g_1 g_2$ has small defect. The meaning of ``having small defect''
will change dynamically as we run the algorithm, depending on the smallest 
defect we have found so far for a triple~$(f,g_1,g_2)$.

This first strategy works well for small~$q$, and it is guaranteed to find the 
genus-$5$ curve of smallest defect that fits into a diagram like 
Diagram~\eqref{EQ:genus5}.  However, there are on the order of $q$ quartics and
cubics $f$ to consider, and for each $f$ we have to enumerate on the order of 
$q^2$ quadratics and linears~$g$, and for each $(f,g)$ pair we have to compute 
the number of points on a genus-$2$ curve.  Assuming we count points na\"ively, 
this means that even just this portion of our first strategy will already take
time roughly on the order of~$q^4$.

For larger fields, therefore, we use an alternate strategy. To produce our pairs
of genus-$2$ curves, we will enumerate many genus-$2$ curves with small defect, 
keeping track of the ones that can be written $y^2 = h$ with $h$ the product of
a quartic and a quadratic in $k[x]$. Suppose we have two such curves, 
$w^2 = f_1 g_1$ and $z^2 = f_2 g_2$.  We would like to be able to tell whether
a change of coordinates (via a linear fractional transformation in $x$) could
transform $f_2$ into a constant times~$f_1$. There are two necessary conditions
for this to happen: First, the degrees of the irreducible factors of $f_1$ must 
match those of $f_2$, and second, the $j$-invariants of the genus-$1$ curves 
$y^2 = f_1$ and $y^2 = f_2$ must be equal. These two conditions are not quite 
sufficient --- there's only one chance in three that two different products of 
irreducible quadratics with the same $j$-invariant can be transformed to 
constant multiples of one another via a linear fractional transformation, and 
there are additional complications for curves with $j$-invariant $0$ or~$1728$
--- but for our purposes these necessary conditions will be good enough as a 
first test.

So our second strategy will be to enumerate genus-$2$ curves with small defect
that can be written $y^2 = f g $ with $f$ a quartic and $g$ a quadratic, and 
keep a list of the curves together with the $j$-invariants and factorization 
degrees of the associated quartics~$f$.  Whenever a $j$-invariant and set of 
factorization degrees occurs twice, and the associated quartics can be 
transformed into one another, we will have found two genus-$2$ curves 
$y^2 = f_1 g_1$ and $y^2 = f_2 g_2$ that can be put into 
Diagram~\eqref{EQ:genus5} (provided that the linear fractional transformation 
that takes $f_2$ to $f_1$ does not take $g_2$ to a quadratic with a factor in 
common with $g_1$). Then we need to compute the number of points on the 
genus-$1$ curve in the middle of the diagram to see whether it has small 
defect.\footnote{
     Warning: On versions of Magma at least up to V2.22-3, 
     using \texttt{\#HyperellipticCurve(f)} to count the 
     number of points on $y^2 = f$ will produce incorrect 
     results when $f$ is a quartic with nonsquare leading 
     coefficient over a finite field with more then $10^6$
     elements.  This affects us when $q = 17^5$ or $q = 19^5$.}

To produce genus-$2$ curves with small defect, we use the technique mentioned
in the previous section: We ``glue together'' pairs of elliptic curves with
small defect, and then use Richelot isogenies to find more and more genus-$2$
curves with Jacobians isogenous to the product of the given elliptic curves.

As we continue to compute, we will find more and more examples.  If at a
certain point we have found a genus-$5$ curve with defect $d$, then we
know that we can stop looking for better examples once we have examined
all of the genus-$2$ curves with defect at most~$d$ that we can construct using
the technique described above.  If we continue to run the algorithm until this
has happened, we say that we have \emph{run the algorithm to completion}.  
Running to completion simply means that we have reached a point when we know
that continuing to run the algorithm will not produce any examples better than
what we have already found.


We ran the first algorithm for all of the odd prime powers $q$ listed in the
\manypointslink\ table with $q\le 19^2$.  On the modest laptop described in 
Section~\ref{sec:intro}, the computation took $50$ minutes for $q = 17^2$ and 
$108$ minutes for $q = 19^2$.  For the larger $q$ in the \manypointslink\ table,
we were able to run the second algorithm to completion; the computation for
$q = 19^5$ took nearly $30$ hours. In Table~\ref{table:5} we present the current 
lower and upper bounds on the minimal defect $D_q(5)$ for the odd values of $q$ 
listed in the online table, with the upper bound in boldface if it was obtained
from our computations.  Equations for the curves we found can be obtained from
the \manypointslink\ site by clicking on the appropriate table entry.

\begin{table}
\begin{center}
\begin{tabular}{llrllrllrllr}
\toprule
$q$ & range &&$q$ & range &&$q$ & range &&$q$ & range \\
\cmidrule(lr){1-2}\cmidrule(lr){4-5}\cmidrule(lr){7-8}\cmidrule(lr){10-11}
$3$   & $6$--$6$           &&$11$   & $4$--$5$           &&$19$   & $3$--$\mathbf{6}$  &&$67$ & $0$--$\mathbf{8}$ \\
$3^2$ & $5$--$8$           &&$11^2$ & $0$--$0$           &&$19^2$ & $0$--$0$           &&$71$ & $0$--$0$          \\
$3^3$ & $3$--$6$           &&$11^3$ & $0$--$\mathbf{0}$  &&$19^3$ & $0$--$\mathbf{5}$  &&$73$ & $3$--$\mathbf{9}$ \\
$3^4$ & $0$--$\mathbf{4}$  &&$11^4$ & $0$--$0$           &&$19^4$ & $0$--$\mathbf{4}$  &&$79$ & $0$--$\mathbf{5}$ \\
$3^5$ & $2$--$\mathbf{13}$ &&$11^5$ & $0$--$0$           &&$19^5$ & $0$--$\mathbf{15}$ &&$83$ & $2$--$\mathbf{8}$ \\
$5$   & $6$--$6$           &&$13$   & $5$--$9$           &&$23$   & $3$--$\mathbf{5}$  &&$89$ & $0$--$\mathbf{4}$ \\
$5^2$ & $4$--$\mathbf{8}$  &&$13^2$ & $0$--$\mathbf{4}$  &&$29$   & $2$--$\mathbf{4}$  &&$97$ & $0$--$\mathbf{5}$ \\
$5^3$ & $3$--$\mathbf{12}$ &&$13^3$ & $0$--$\mathbf{9}$  &&$31$   & $5$--$5$           \\
$5^4$ & $0$--$\mathbf{8}$  &&$13^4$ & $0$--$\mathbf{8}$  &&$37$   & $4$--$\mathbf{8}$  \\
$5^5$ & $0$--$\mathbf{17}$ &&$13^5$ & $0$--$\mathbf{8}$  &&$41$   & $0$--$5$           \\
$7$   & $5$--$7$           &&$17$   & $5$--$\mathbf{8}$  &&$43$   & $3$--$\mathbf{9}$  \\
$7^2$ & $0$--$0$           &&$17^2$ & $0$--$\mathbf{4}$  &&$47$   & $0$--$\mathbf{5}$  \\
$7^3$ & $3$--$\mathbf{9}$  &&$17^3$ & $2$--$\mathbf{10}$ &&$53$   & $0$--$\mathbf{4}$  \\
$7^4$ & $0$--$\mathbf{8}$  &&$17^4$ & $0$--$\mathbf{4}$  &&$59$   & $2$--$\mathbf{7}$  \\
$7^5$ & $0$--$\mathbf{15}$ &&$17^5$ & $0$--$\mathbf{5}$  &&$61$   & $0$--$5$           \\
\bottomrule
\end{tabular}
\end{center}
\bigskip
\caption{The best upper and lower bounds known for the minimal defect $D_q(5)$ 
for the odd values of $q$ represented in the \manypointslink\ table, as of 
29 March 2017. The upper bounds in boldface come from computations described
in this paper.}
\label{table:5}
\end{table}

One could also try to construct curves of genus $5$ by considering diagrams like
Diagram~\eqref{EQ:V4} in which the genera of the intermediate curves are $1$, 
$3$, and~$1$; Soomro~\cite{Soomro2013} takes this approach.

\section{Constructions for genus-\texorpdfstring{$6$}{6} curves}
\label{sec:6}

We work over a finite field $k = \Fq$ of odd characteristic.  The genus-$6$ 
curves that we will construct will fit into a diagram
\begin{equation}
\label{EQ:genus6}
\begin{gathered}
\xymatrix{
            & D\ar[dl]\ar[dr]       &             \\
 C_1\ar[dr] & C_2 \ar@{<-}[u]\ar[d] & C_3\ar[dl]  \\
            &\ \PP^1,               &             \\
}
\end{gathered}
\end{equation}
where the curves $C_1$, $C_2$, and $C_3$ have genus~$2$ and where the arrows
represent degree-$2$ maps.  Then the defect of $D$ will be the sum of the
defects of the $C_i$.  To produce such a diagram, we need to find three cubic
polynomials $f_1$, $f_2$, and $f_3$ such that each curve $C_i$ is given by 
$y^2 = \prod_{j\ne i} f_j$. (Actually, one of the $f_i$ could be a quadratic;
this will be the case when two of the $C_i$ have a Weierstrass point at
infinity.) And, of course, we want the $C_i$ to have small defect.

Before we describe how to construct good triples $(f_1,f_2,f_3)$, we note that 
we can define an action of $\PGL_2(k)$ on the set of separable cubics and 
quadratics in~$k[x]$, up to multiplicative constants in $k^*$, analogous to the
action we defined in Section~\ref{sec:5} for quartics and cubics up to squares 
in~$k^*$.  Given a cubic or quadratic $f\in k[x]/k^*$ and $\alpha\in\PGL_2(k)$, 
we let $\left[\begin{smallmatrix}a&b\\c&d\end{smallmatrix}\right]$ be a matrix 
that represents $\alpha$, and we define 
\[
\alpha(f \bmod k^*) = (cx+d)^3 f\left(\frac{ax+b}{cx+d}\right) \bmod k^*.
\]
Since everything is defined only up to $k^*$, this does not depend on the 
choice of representative for~$\alpha$.  Now we set $g_1 = x(x-1)$, and we fix
irreducible quadratic and cubic polynomials $g_2$ and~$g_3$.  It is easy to see 
from the $3$-transitive action of $\PGL_2(\kbar)$ on $\PP^1(\kbar)$ that every
separable cubic or quadratic polynomial $h\in k[x]$ can be transformed by an
element of $\PGL_2(k)$ into a constant times one of the polynomials $g_1$, 
$g_2$, or~$g_3$, depending on the degrees of the irreducible factors of~$h$.

We construct triples $(f_1,f_2,f_3)$ as follows.  We start enumerating 
genus-$2$ curves $C$ with small defect, and we keep track of the ones that can
be written $y^2 = h_1 h_2$, where $h_1$ and $h_2$ are cubics.  For every such 
representation of $C$, we apply a linear fractional transformation that takes
$h_1$ to a constant times one of our three fixed polynomials $g_1$, 
$g_2$,~$g_3$.  That means we can write $C$ as $y^2 = g_i f$ for some cubic 
(or, in some circumstances, quadratic) polynomial~$f$.  Whenever we add a new 
pair $(g_i, f)$ to our growing list, we look at all other pairs $(g_i, \hat{f})$
already on our list, and we check to see whether $y^2 = f \hat{f}$ is a curve 
with small defect. If so, by setting $f_1 = f$ and $f_2 = \hat{f}$ and 
$f_3 = g_i$ we have a triple $(f_1,f_2,f_3)$ that gives us a genus-$6$ curve
with small defect.

We continue enumerating genus-$2$ curves $C$ with small defect until we reach 
the defect of the current record-holding triple $(f_1,f_2,f_3)$.  At this point 
we are guaranteed that we will find no curves of smaller defect by using this
construction, and again we say that we have run the algorithm to completion.

One way of producing genus-$2$ curves of small defect is simply to make a list 
of all curves of the form $y^2 = g_i f$ for $i=1,2,3$ and for $f$ ranging over
the separable cubic and quadratic polynomials in $k[x]$, up to squares in~$k^*$.  
Then one can compute the defects of the curves in the list, and sort the list 
accordingly.

If the field $k$ is too large for this to be feasible, and if $k$ has a proper
subfield, one can instead consider only the cubics and quadratics $f$ with
coefficients in the subfield.  This is much faster, but of course results in
missing many possible curves that might have small defect.

A third possibility is to use the procedure already described in earlier 
sections: gluing together pairs of elliptic curves of small defect, and
using Richelot isogenies to obtain even more genus-$2$ curves.

And finally, we have a fourth algorithm with a slightly different flavor, which 
depends on the observation that a random element of $\PGL_2(\Fq)$ has order~$3$ 
with probability approximately $1/q$.  (More precisely, by computing the 
centralizer of the image of 
$\left[\begin{smallmatrix} 0 & -1 \\ 1 & \phantom{-}1\end{smallmatrix}\right]$ 
in $\PGL_2(\Fq)$, one shows that the fraction of elements of $\PGL_2(\Fq)$ of 
order $3$ is equal to either $1/q$ or $1/(q-1)$ or $1/(q+1)$, depending on 
whether $q$ is congruent to $0$, $1$, or $2$ modulo~$3$.)

We enumerate genus-$2$ curves with small defect by gluing together elliptic 
curves and using Richelot isogenies.  Suppose $C$ is such a curve, say given by 
an equation $y^2 = g$, where $g$ is a sextic polynomial, and suppose we can 
write $g$ as $f_1 f_2$, where $f_1$ and $f_2$ are cubics. We compute all of the 
linear fractional transformations $\mu$ that take the roots of $f_2$ to the
roots of~$f_1$.  Suppose further that one such $\mu$ has order $3$ in 
$\PGL_2(k)$; if we assume that $\mu$ behaves like a random element 
of~$\PGL_2(k)$, this will happen with probability about~$1/q$.  In this case, 
we write $\mu = (ax+b)/(cx+d)$; the condition that $\mu$ has order $3$ means 
that $a^2 + ad + d^2 + bc = 0$.  Let $e$ be the constant such that 
$f_2(x) = e f_1(\mu)(cx + d)^3$, and set $f_3(x) = e f_2(\mu)(cx + d)^3$.
Note that then
\[ e f_3(\mu) (cx + d)^3 = - e^3 (a + d)^9 f_1(x).\]
We see that change of variables $(x,y) \mapsto (\mu, e^{-1}y/(cx+d)^3)$ gives an 
isomorphism from the curve $y^2 = f_2 f_3$ to the curve $y^2 = f_1 f_2$, and if
$-e(a+d)^3$ is a square, say $-e(a+d)^3=s^2$, then 
$(x,y) \mapsto (\mu, e^{-1}s^3 y / (cx+d)^3)$ gives an isomorphism from
$y^2 = f_3 f_1$ to $y^2 = f_2 f_3$.  Thus, if $-e(a+d)^3$ is a square, the
triple $(f_1,f_2,f_3)$ gives us a diagram like Diagram~\eqref{EQ:genus6} in
which the three curves $C_1$, $C_2$, and $C_3$ are all isomorphic to the 
small-defect curve $C$ that we started with.  Then the genus-$6$ curve $D$ has 
defect equal to three times the defect of $C$.

Asymptotically, we expect this last algorithm to be much faster than the other 
three, because we are waiting on an event with probability roughly~$1/q$, rather
than the much rarer event that a random genus-$2$ curve has small defect.  As we 
noted in Section~\ref{sec:4}, the heuristics from~\cite{Howe2016} lead us to
expect that the probability that a random genus-$2$ curve over $\Fq$ has defect
at most $d$ grows like~$d^{5/2} q^{-9/4}$.


We implemented all four algorithms.  We applied the first to all odd $q<100$ 
listed in the \manypointslink\ table.  (For $q = 97$ this took $8$ minutes on
the laptop computer described in Section~\ref{sec:intro}.) We applied the 
second to all odd $q>100$ from the \manypointslink\ table, using the largest 
subfield of cardinality less than $100$.  (This took $2$ minutes for $q = 7^4$.)  
We applied the third method to the $q$ with $100 < q < 19^3$ and the fourth 
method to the $q$ with $q\ge 19^3$. (The third method took $370$ minutes for 
$q = 17^3$ and the fourth method took $15$ minutes for $q = 19^5$.  However, 
for $q = 17^5$ the fourth method took nearly $28$ hours, because the smallest 
defect found --- $108$ --- is quite large.) In Table~\ref{table:6} we present 
the current lower and upper bounds on the minimal defect $D_q(6)$ for the odd 
values of $q$ listed in the online table, with the upper bound in boldface if 
it was obtained from our computations. Equations for the curves we found can be
obtained from the \manypointslink\ site by clicking on the appropriate table 
entry.

\begin{table}
\begin{center}
\begin{tabular}{llrllrllrllr}
\toprule
$q$ & range &&$q$ & range &&$q$ & range &&$q$ & range \\
\cmidrule(lr){1-2}\cmidrule(lr){4-5}\cmidrule(lr){7-8}\cmidrule(lr){10-11}
$3$   & $8$--$8$           &&$11$   & $3$--$8$            &&$19$   & $3$--$\mathbf{8}$  &&$67$ & $0$--$12$         \\
$3^2$ & $8$--$11$          &&$11^2$ & $0$--$0$            &&$19^2$ & $0$--$0$           &&$71$ & $0$--$\mathbf{8}$ \\
$3^3$ & $4$--$12$          &&$11^3$ & $0$--$\mathbf{0}$   &&$19^3$ & $0$--$\mathbf{14}$ &&$73$ & $2$--$6$          \\
$3^4$ & $0$--$0$           &&$11^4$ & $0$--$\mathbf{6}$   &&$19^4$ & $0$--$\mathbf{6}$  &&$79$ & $0$--$6$          \\
$3^5$ & $2$--$\mathbf{14}$ &&$11^5$ & $0$--$\mathbf{0}$   &&$19^5$ & $0$--$\mathbf{18}$ &&$83$ & $2$--$12$         \\
$5$   & $6$--$8$           &&$13$   & $6$--$6$            &&$23$   & $3$--$\mathbf{8}$  &&$89$ & $0$--$\mathbf{4}$ \\
$5^2$ & $4$--$6$           &&$13^2$ & $0$--$0$            &&$29$   & $2$--$\mathbf{8}$  &&$97$ & $0$--$\mathbf{8}$ \\
$5^3$ & $3$--$\mathbf{14}$ &&$13^3$ & $0$--$\mathbf{14}$  &&$31$   & $6$--$\mathbf{10}$ \\
$5^4$ & $0$--$0$           &&$13^4$ & $0$--$0$            &&$37$   & $6$--$12$          \\
$5^5$ & $0$--$\mathbf{20}$ &&$13^5$ & $0$--$\mathbf{24}$  &&$41$   & $2$--$\mathbf{10}$ \\
$7$   & $6$--$\mathbf{10}$ &&$17$   & $6$--$\mathbf{10}$  &&$43$   & $6$--$\mathbf{14}$ \\
$7^2$ & $0$--$\mathbf{8}$  &&$17^2$ & $0$--$0$            &&$47$   & $3$--$6$           \\
$7^3$ & $3$--$\mathbf{16}$ &&$17^3$ & $2$--$\mathbf{12}$  &&$53$   & $3$--$6$           \\
$7^4$ & $0$--$0$           &&$17^4$ & $0$--$0$            &&$59$   & $3$--$\mathbf{10}$ \\
$7^5$ & $2$--$\mathbf{42}$ &&$17^5$ & $0$--$\mathbf{108}$ &&$61$   & $0$--$\mathbf{8}$  \\
\bottomrule
\end{tabular}
\end{center}
\bigskip
\caption{The best upper and lower bounds known for the minimal defect $D_q(6)$ 
for the odd values of $q$ represented in the \manypointslink\ table, as of 
29 March 2017. The upper bounds in boldface come from computations described 
in this paper.}
\label{table:6}
\end{table}

\section{Constructions for genus-\texorpdfstring{$7$}{7} curves}
\label{sec:7}

We work over a finite field $k = \Fq$ of odd characteristic. In our previous 
constructions, we produced degree-$4$ Galois extensions of~$\PP^1$ with group 
$(\ZZ/2\ZZ)^2$ by adjoining to $k(x)$ the square roots of two polynomials.  
For our genus-$7$ construction, we will instead produce degree-$8$ Galois 
extensions with group $(\ZZ/2\ZZ)^3$ by adjoining the square roots of three
polynomials $f_1$, $f_2$,~$f_3$.  According to the previously-cited result of 
Kani and Rosen~\cite[Theorem B, p.~308]{KaniRosen1989}, the resulting curve $D$
will have Jacobian isogenous to the product of the Jacobians of the seven
curves
\[
y^2 = f_1, \ 
y^2 = f_2, \ 
y^2 = f_3, \ 
y^2 = f_2 f_3, \ 
y^2 = f_1 f_3, \ 
y^2 = f_1 f_2, \ \text{and}\ 
y^2 = f_1 f_2 f_3,
\]
and the defect of $D$ will be the sum of the defects of these seven curves.  In 
order to produce a curve of genus~$7$, we will therefore want to have a method 
of choosing the polynomials $f_1$, $f_2$, and $f_3$ so that the sum of the 
genera of the seven associated curves is~$7$.

We used two methods to find such polynomials. The first method involves taking
\[
f_1 = s_1 (x-1) g_1, \qquad
f_2 = s_2 (x-1) g_2, \quad \text{and}\quad
f_3 = s x,
\]
where $g_1$ and $g_2$ are monic quadratic polynomials that are coprime to one
another and to $x-1$, and where $s$, $s_1$, and $s_2$ are nonzero constants that
only matter up to squares.  Up to squares in $k[x]$, the other polynomials we
then have to consider are
\begin{align*}
f_2f_3 &= s s_2 x (x-1) g_2\\
f_1f_3 &= s s_1 x (x-1) g_1 \\
f_1f_2/ (x-1)^2 &= s_1 s_2  g_1 g_2\\
f_1f_2f_3/(x-1)^2 &= s s_1 s_2 x  g_1 g_2.
\end{align*}
These seven polynomials give us hyperelliptic curves of genus $1$, $1$, $0$, 
$1$, $1$, $1$, and~$2$, respectively, so the curve $C$ will have genus~$7$.

To produce $f_1$, $f_2$, and $f_3$ of this form such that all of the seven
associated curves have small defect, we begin by enumerating the monic 
quadratic polynomials $h$ such that the genus-$1$ curves $y^2 = (x-1) h$ and 
$y^2 = x(x-1)h$ both have twists with small defect.  Then we let $g_1$ and $g_2$
range over the set of such $h$, and we check to see whether we can choose
constants $s$, $s_1$, and $s_2$ such that the curves defined by $f_1$, $f_2$, 
$f_1 f_3$, and $f_2 f_3$ simultaneously have small defect.  If we succeed in
doing so, we then check whether the genus-$1$ curve defined by $f_1 f_2$ has
small defect, and if it does, we check whether the genus-$2$ curve defined by 
$f_1 f_2 f_3$ has small defect.


We were able to run this algorithm to completion for the $q$ in the
\manypointslink\ table up to $17^3$.  On the laptop described in 
Section~\ref{sec:intro}, the computation for $q = 17^3$ took just over $29$ 
hours.

The second method involves taking 
\begin{align*}
f_1 &= s_1 x (x-1)       (x-b)              \\
f_2 &= s_2         (x-a) (x-b) (x-c)        \\
f_3 &= s_3 x       (x-a)             (x-d),
\end{align*}
where $a$, $b$, $c$, and $d$ are elements of $k$ that are distinct from one
another and from $0$ and $1$, and where $s_1$, $s_2$, and $s_3$ are nonzero
constants that only matter up to squares.  Up to squares in $k[x]$, the other 
polynomials we then have to consider are
\begin{align*}
f_2f_3    /   (x-a)^2       &=     s_2 s_3 x             (x-b) (x-c) (x-d) \\
f_1f_3    /  x^2            &= s_1     s_3   (x-1) (x-a)       (x-c) (x-d) \\
f_1f_2    /        (x-b)^2  &= s_1 s_2     x (x-1) (x-a)       (x-c)       \\
f_1f_2f_3 / (x(x-a)(x-b))^2 &= s_1 s_2 s_3   (x-1)             (x-c) (x-d).
\end{align*}
Each of these seven polynomials gives us a curve of genus $1$, so the curve $D$ 
will have genus~$7$.

(This construction of a $(\ZZ/2\ZZ)^3$-extension of $\PP^1$ of genus $7$ such 
that all the quadratic subextensions have genus $1$ can be viewed as a 
generalization of a method of constructing the Fricke--Macbeath 
curve~\cite{Macbeath1965}; the description of the Fricke--Macbeath curve 
discussed on pp.~533--534 of~\cite{Macbeath1965} makes this clear. In fact, 
if we take
\[
a =  - \zeta - \zeta^6, \qquad
b = \zeta^2 + \zeta^5, \qquad
c = \zeta^3 + \zeta^4 + 1, \quad \text{and\quad }
d = -\zeta - \zeta^6 -1,
\]
where $\zeta$ is a primitive $7$-th root of unity, we find that our curve is
geometrically isomorphic to the Fricke--Macbeath curve, because the linear 
fractional transformation $x \mapsto (x + \zeta)/(x + \zeta^{-1})$ sends the
set of ramification points $\{0,1,\infty,a,b,c,d\}$ to the set
$\{1,\zeta,\zeta^2,\zeta^3,\zeta^4,\zeta^5,\zeta^6\}$. Indeed, our construction
was inspired by seeing several records on the \manypointslink\ site produced by
Jaap Top and Carlo Verschoor by studying twists of the Fricke--Macbeath curve.)

Our strategy is to compute all the values of $\lambda\in k$ such that the
curve $y^2 = x(x-1)(x-\lambda)$, or its quadratic twist, has small defect.
Then we choose four such values $\lambda_1, \lambda_2, \lambda_3$, and 
$\lambda_4$, and compute the values of $a$, $b$, $c$, and $d$ such that the
curves defined by $y^2 = f_1$, $y^2 = f_3$, $y^2 = f_1 f_3$, and 
$y^2 = f_1 f_2 f_3$ have those $\lambda$-invariants (for a given ordering of
their $2$-torsion points).  Then we check to see whether we can choose the 
constants $s_1$, $s_2$, and $s_3$ so that the curves have small defect. If we 
succeed, we then check to see whether the remaining three curves have small 
defect.  We can do this quickly by first checking to see whether their 
$\lambda$-invariants are among those we computed at the beginning.

Of course, what we mean by ``small defect'' will change dynamically as we run 
our algorithm, depending on the curves we find.

We ran this algorithm to completion on all odd $q<17^5$ from the
\manypointslink\ list.  (For $q = 13^5$ this took nearly $50$ hours.) For 
$q = 17^5$ and $q=19^5$, we ran the algorithm for more than a week on a newer 
and faster desktop machine, but stopped with partial results and did not run to
completion. Combining the examples we found using both algorithms, we obtain the
results presented in Table~\ref{table:7}. The table gives the current lower and
upper bounds on the minimal defect $D_q(7)$ for the odd values of $q$ listed in 
the online table, with the upper bound in boldface if it was obtained from our 
computations. Equations for the curves we found can be obtained from the 
\manypointslink\ site by clicking on the appropriate table entry.

\begin{table}
\begin{center}
\begin{tabular}{llrllrllrllr}
\toprule
$q$ & range &&$q$ & range &&$q$ & range &&$q$ & range \\
\cmidrule(lr){1-2}\cmidrule(lr){4-5}\cmidrule(lr){7-8}\cmidrule(lr){10-11}
$3$   & $9$--$9$           &&$11$   & $4$--$\mathbf{10}$  &&$19$   & $0$--$\mathbf{12}$  &&$67$ & $4$--$\mathbf{16}$ \\
$3^2$ & $9$--$12$          &&$11^2$ & $0$--$0$            &&$19^2$ & $0$--$0$            &&$71$ & $0$--$\mathbf{8}$  \\
$3^3$ & $3$--$14$          &&$11^3$ & $0$--$\mathbf{24}$  &&$19^3$ & $0$--$\mathbf{23}$  &&$73$ & $6$--$\mathbf{21}$ \\
$3^4$ & $0$--$\mathbf{12}$ &&$11^4$ & $0$--$\mathbf{28}$  &&$19^4$ & $0$--$\mathbf{28}$  &&$79$ & $0$--$\mathbf{7}$  \\
$3^5$ & $2$--$\mathbf{21}$ &&$11^5$ & $0$--$\mathbf{82}$  &&$19^5$ & $0$--$\mathbf{105}$ &&$83$ & $3$--$\mathbf{18}$ \\
$5$   & $8$--$10$          &&$13$   & $6$--$\mathbf{11}$  &&$23$   & $3$--$\mathbf{11}$  &&$89$ & $0$--$\mathbf{8}$  \\
$5^2$ & $4$--$\mathbf{12}$ &&$13^2$ & $0$--$\mathbf{12}$  &&$29$   & $3$--$\mathbf{12}$  &&$97$ & $0$--$\mathbf{15}$ \\
$5^3$ & $3$--$\mathbf{20}$ &&$13^3$ & $0$--$\mathbf{37}$  &&$31$   & $6$--$\mathbf{17}$  \\
$5^4$ & $0$--$\mathbf{20}$ &&$13^4$ & $0$--$\mathbf{28}$  &&$37$   & $6$--$\mathbf{14}$  \\
$5^5$ & $0$--$\mathbf{39}$ &&$13^5$ & $0$--$\mathbf{60}$  &&$41$   & $2$--$\mathbf{10}$  \\
$7$   & $7$--$7$           &&$17$   & $4$--$\mathbf{14}$  &&$43$   & $7$--$\mathbf{11}$  \\
$7^2$ & $0$--$0$           &&$17^2$ & $0$--$0$            &&$47$   & $3$--$\mathbf{15}$  \\
$7^3$ & $3$--$\mathbf{27}$ &&$17^3$ & $2$--$\mathbf{26}$  &&$53$   & $4$--$\mathbf{12}$  \\
$7^4$ & $0$--$\mathbf{16}$ &&$17^4$ & $0$--$\mathbf{12}$  &&$59$   & $5$--$\mathbf{17}$  \\
$7^5$ & $4$--$\mathbf{45}$ &&$17^5$ & $0$--$\mathbf{123}$ &&$61$   & $3$--$\mathbf{11}$  \\
\bottomrule
\end{tabular}
\end{center}
\bigskip
\caption{The best upper and lower bounds known for the minimal defect $D_q(7)$ 
for the odd values of $q$ represented in the \manypointslink\ table, as of 
29 March 2017. The upper bounds in boldface come from computations described
in this paper.}
\label{table:7}
\end{table}

\section{Conclusion}

Even as we were writing this paper and implementing the algorithms described
here, other researchers were adding new lower bounds for values of $N_q(g)$ in
the \manypointslink\ table, including many for $g = 5$.  Some of these bounds
were better than the results obtained by our methods, some were not.  We hope
that the inclusion of our new lower bounds in the online table will encourage
further development of algorithms to produce curves with many points.

\bibliographystyle{hplaindoi} 
\bibliography{Howe-FFA}

\end{document}